\newtheorem{theorem}{Theorem}[section]
\newtheorem{lemma}[theorem]{Lemma}
\theoremstyle{definition}
\newtheorem{definition}[theorem]{Definition}
\newcommand{\N}{\mathbb{N}}
\newcommand{\R}{\mathbb{R}}
\newcommand{\Leb}[1]{{\mathscr{L}}^{#1}} % Misura di Lebesgue
 \newcommand{\negint}{{\int\negthickspace\negthickspace\negthickspace\negthickspace-}}
\title[Logarithmic estimates for continuity equations]{Logarithmic estimates \\ for continuity equations}
\author[Maria Colombo, Gianluca Crippa and Stefano Spirito]{}
\subjclass{Primary: 35F16; Secondary: 37C10.}
\keywords{Continuity and Transport Equations, Well-Posedeness, Lagrangian Flows, Renormalized Solutions.}
 \email{maria.colombo@sns.it}
 \email{gianluca.crippa@unibas.ch}
 \email{stefano.spirito@gssi.infn.it}
\begin{document}
\maketitle

\centerline{\scshape Maria Colombo}
\medskip
{\footnotesize
\centerline{Scuola Normale Superiore}
\centerline{Piazza dei Cavalieri 7}
\centerline{56126 Pisa, Italy}
} 

\medskip

\centerline{\scshape Gianluca Crippa}
\medskip
{\footnotesize
\centerline{Departement Mathematik und Informatik}
\centerline{Universit\"at Basel}
\centerline{Spiegelgasse 1}
 \centerline{CH-4051 Basel, Switzerland}
}

\medskip

\centerline{\scshape Stefano Spirito}
\medskip
{\footnotesize
\centerline{GSSI - Gran Sasso Science Institute}
\centerline{Viale Francesco Crispi 7}
 \centerline{67100 L'Aquila, Italy}
}

\bigskip

\begin{abstract}
The aim of this short note is twofold. First, we give a sketch of the proof of a recent result proved by the authors in the paper \cite{CCS} concerning existence and uniqueness of renormalized solutions of continuity equations with unbounded damping coefficient. Second, we show how the ideas in \cite{CCS} can be used to provide an alternative proof of the result in \cite{clop,des,bmo}, where the usual requirement of boundedness of the divergence of the vector field has been relaxed to various settings of exponentially integrable functions. %$BMO$. 
%\textbf{200} words.
\end{abstract}

% You may incorporate your references as follows in your main tex file.
% Using BibTex is not recommended but can be handled.

\section{Introduction}\label{sec:Intro}
In this paper we consider the Cauchy problem for the continuity equation, namely
\begin{equation}\label{eqn:cont-damp}
\begin{cases}
\partial_t u(t,x)+ \nabla \cdot (b(t,x)u(t,x))= c(t,x)u(t,x)\\
u(0,x)=u_0(x)
\end{cases}
\end{equation}
where $(t,x)\in(0,T)\times\R^d$, $u\in\R$, $b\in\R^d$  and $c\in\R$. The continuity equation is a fundamental tool to study various nonlinear partial differential equations of the mathematical physics, for example equations arising in fluid mechanics and kinetic theory. In many physical situations the continuity equation has to be considered in a non-smooth setting. Starting from the papers of DiPerna and Lions \cite{lions} and Ambrosio \cite{ambrosio}, a huge literature has been developed in this direction (for an overview, see \cite{amcri-edi} and the references quoted therein). Roughly speaking, the continuity equation~\eqref{eqn:cont-damp} is well-posed in the class of bounded distributional solutions, and in the class of
renormalized solutions, if the vector field $b$ has bounded divergence, namely $\nabla\cdot b\in L^{1}(0,T;L^{\infty}(\R^d))$, and has at least a derivative of first order in some weak sense. More precisely, the case of Sobolev regularity has been considered in \cite{lions} and the  $BV$ regularity in \cite{ambrosio}. 
 
Concerning the source term $c$, that we will call damping term in analogy with fluid mechanics, the classical requirement is $c\in L^{1}(0,T;L^{\infty}(\R^d))$. In the paper \cite{CCS}  the case when the damping term is only in $ L^1((0,T)\times\R^d)$ has been considered. It turns out that dealing with this low integrability assumption requires a different approach from the ones in \cite{lions} and \cite{ambrosio}. 

Let us first explain at a very formal level the approach for the case where $c\in L^{1}(0,T;L^{\infty}(\R^d))$. 
In this case the crucial part in the theory is proving uniqueness of distributional solutions.
Let $u$ be the difference between two distributional solutions with the same initial datum. By linearity $u$ solves \eqref{eqn:cont-damp} with initial datum~$0$. Multiplying the equation by $2u$ one has
\begin{equation}
\label{eqn:dipl-strategy}
\begin{aligned}
\frac{d}{dt}\int_{\R^d} u(t,x)^2 \, dx 
&= \int_{\R^d} (2c(t,x)-\nabla \cdot b(t,x))  u(t,x)^2 \, dx
\\&
\leq
 (2\|c(t,\cdot)\|_{L^\infty(\R^d)}+ \|\nabla \cdot b(t,\cdot)\|_{L^\infty(\R^d)})\int_{\R^d} u(t,x)^2 \, dx.
\end{aligned}
\end{equation}
It follows by Gronwall Lemma that $\int_{\R^d} u(t,x)^2 \, dx =0$ for every $t\in [0,T]$, which implies uniqueness.

When $c$ is only in $L^1((0,T)\times\R^d)$ the previous formal calculation does not work anymore and we need to find another strategy to prove uniqueness. Inspired by the argument in \cite{crippade1} we proceed, always formally, as follows.
As in the computation \eqref{eqn:dipl-strategy}, we consider the difference $u$ of two solutions with the same initial datum and we multiply \eqref{eqn:cont-damp} by $u/(\delta+u^2)$, where $\delta>0$ is fixed, and we obtain
\begin{equation}
\label{eqn:ccs-strategy}
\begin{split}
\frac{d}{dt} \int_{\R^d} \! \log\Big(1+ \frac{ u(t,x)^2}{\delta} \Big) \, dx 
&= \int_{\R^d}\! \nabla \cdot b(t,x) \log\Big(1+ \frac{ u(t,x)^2}{\delta} \Big) \, dx\\
& \; \; + 2\int_{\R^d}\!(c(t,x)-\nabla\cdot b(t,x)) \frac{ u(t,x)^2}{\delta + u(t,x)^2}  \, dx\\
&\leq\|\nabla \cdot b(t,\cdot)\|_{L^\infty(\R^d)} \int_{\R^d} \log\Big(1+ \frac{ u(t,x)^2}{\delta} \Big) \, dx\\
& \; \; +2\int_{\R^d} |c(t,x)|+|\nabla\cdot b(t,x)| \, dx.
\end{split}
\end{equation}
(We have to assume here that the divergence is also globally integrable in space, however with a suitable truncation argument one can see that its boundedness is in fact sufficient.) By Gronwall Lemma we deduce that for every $t\in [0,T]$
\begin{equation*}
\begin{split}
 \int_{\R^d} \log\Big(1+ \frac{ u(t,x)^2}{\delta} \Big) \, dx 
&\leq
\exp\Big( \int_0^T \|\nabla \cdot b(t,\cdot)\|_{L^\infty(\R^d)} \, dt \Big)\\
&\hspace{2em}\cdot \int_0^T \int_{\R^d} 2(|c(t,x)|+|\nabla\cdot b(t,x)|) \, dx\, dt.
\end{split}
\end{equation*}
Letting finally $\delta$ go to $0$, since the right-hand side is finite and independent of $\delta$ we obtain that $u(t, \cdot ) = 0$. Now, we have a formal procedure to prove uniqueness which requires only the damping term $c$ to be integrable. However, a rigorous justification is definitely not straightforward and we will outline the main arguments in the next section. Then, in Section \ref{sec:bmo} we will show how the arguments can be adapted to the case of $\nabla\cdot b\in L^{1}(0,T; (BMO \cap L^1)(\R^d))$, i.e.~when the divergence is globally integrable and has bounded mean oscillation in the space variable (see Section~\ref{sec:bmo} for the definition of this space).

\section{The case of an integrable damping}\label{sec:2}
In this section we explain the main arguments in the paper \cite{CCS} to prove existence and uniqueness of solutions of the Cauchy problem \eqref{eqn:cont-damp}. The first nontrivial problem we have to face concerns the definition of solution. In the smooth setting there is a well-known representation formula for solutions of \eqref{eqn:cont-damp}. Let us recall that the continuity equation is strictly related to the ordinary differential equation for the flow of $b$, namely
\begin{equation}
\label{eqn:ODE}
\begin{cases}
\partial_t X(t,x) = b(t,X(t,x)) \qquad \forall t \in (0,T)
\\
X(0,x) =x
\end{cases}
\end{equation}
where $x\in \R^d$. Assuming that $u_0$, $b$ and $c$ are smooth and compactly supported, 
a solution of  \eqref{eqn:cont-damp} is then given in terms of the flow $X$ by the following well-known explicit formula: 
\begin{equation}\label{eqn:cont-damp-sol}
u(t,x)= \frac{u_0(X^{-1}(t,\cdot)(x))}{JX(t, X^{-1}(t,\cdot)(x))} \exp \Big({\int_0^t c(\tau, X(\tau, X^{-1}(t,\cdot)(x))) \, d\tau} \Big),
\end{equation}
where $JX(t,\cdot)$ denotes the Jacobian of the map $X(t,\cdot)$.

Denoting with $f_\sharp \mu$ the pushforward of a Borel measure $\mu$ on $\R^d$ through a Borel function $f:\R^d \to \R^d$,  \eqref{eqn:cont-damp-sol} can be equivalently rewritten as
\begin{equation}\label{eqn:cont-damp-sol-pushforw}
u(t,\cdot) \Leb{d} = X(t, \cdot)_\sharp \left( u_0 \exp\Big({\int_0^t c(\tau,X(\tau, \cdot)) \, d\tau}\Big) \Leb{d} \right).
\end{equation}
When $c$ is only integrable we cannot expect to have a solution $u$ in some Lebesgue space. Indeed, \eqref{eqn:cont-damp-sol} does not make sense as distributional solution even in the simplest autonomous cases: let $b(t,x) = 0$, $u_0 = 1_{[0,1]^d}$, and $c\in L^1(\R^d)$. A solution of \eqref{eqn:cont-damp} is given by $u(t,x) = u_0(x) e^{tc(x)}$; however $u(t,\cdot)$ may not belong to $L^1_{\rm loc}(\R^d)$ due to the low integrability of $c$.
In this case \eqref{eqn:cont-damp-sol} is not a distributional solution of \eqref{eqn:cont-damp}. Then, we are forced to replace distributional solutions by renormalized solutions (originally introduced in \cite{lions}),
%{\color{red} magari menzioniamo che sono state introdotte da DP-L?}, 
whose definition does not require the function $u$ to be integrable. 
Specifically, the notion of solution for the Cauchy problem \eqref{eqn:cont-damp} is the following. 
\begin{definition}\label{defn:renorm-sol} Let $u_0: \R^d \to \R$ be a measurable  function, let $b\in L^1_{\rm loc} ((0,T) \times \R^d; \R^d)$ be a vector field such that $\nabla \cdot b\in L^1_{\rm loc} ((0,T) \times \R^d)$ and let $c\in L^1_{\rm loc} ((0,T) \times \R^d)$.
A measurable function $u:[0,T] \times \R^d \to \R$ is a renormalized solution of \eqref{eqn:cont-damp} with initial datum $u_0$ if for every function $\beta:\R\to \R$ satisfying
\begin{equation}
\label{defn:beta}
\beta \in C^1\cap L^\infty(\R), \qquad \beta'(z) z \in L^\infty(\R), \qquad \beta(0)=0
\end{equation}
 we have that
 \begin{equation}
 \label{defn:renorm-sol-eq}
 \partial_t \beta (u) + \nabla \cdot (b\beta(u)) + \nabla \cdot b \big(u \beta'(u)-\beta(u)\big) = cu \beta'(u)
 \end{equation} in the sense of distributions, namely 
  for every $\phi \in C^{\infty}_c ( [0,T) \times \R^d)$
 \begin{equation}
 \label{defn:renorm-sol-tested-st}
 \begin{split}
&\hspace{-3em}\int_{\R^d} \phi(0,x) \beta (u_0)\, dx +
 \int_0^T \! \int_{\R^d} [\partial_t \phi + \nabla\phi \cdot b ]\beta(u) \, dx \, dt 
  \\& + \int_0^T \!\! \int_{\R^d} \!\phi \Big[ \nabla \cdot b \big(\beta(u)-u \beta'(u)\big) +  cu \beta'(u) \Big] \, dx \, dt =0.
  \end{split}
\end{equation}
 \end{definition}
Notice that if we assume $c\in L^1( (0,T) \times \R^d)$ 
we have that 
\begin{equation}
\label{eqn:buona-def-JX}
\int_{\R^d} \int_0^T |c(\tau,X(\tau, x))| \, d\tau \, dx
\leq C
\int_0^T\int_{\R^d} |c(\tau,x)| \, d\tau \, dx <\infty
\end{equation}
since the flow is assumed to compress the Lebesgue measure in a controlled way, see~\cite{CCS}. Therefore the function $u$   in \eqref{eqn:cont-damp-sol} is well-defined pointwise almost everywhere.
 
In order to prove existence of renormalized solutions of \eqref{eqn:cont-damp} we argue as follows. First, we construct the flow $X$ for the ODE \eqref{eqn:ODE} by using results from \cite{lions,ambrosio,crippade1}, and then we check by a direct computation that \eqref{eqn:cont-damp-sol} is a renormalized solution of~\eqref{eqn:cont-damp}.
We remark that in the case of a divergence-free vector field $b$ proving that \eqref{eqn:cont-damp-sol} solves \eqref{eqn:cont-damp} is just a direct computation. On the other hand, the case when $b$ is not divergence-free requires a bit of work to justify the change of variable between Eulerian and Lagrangian coordinates. Precisely, some properties of the weak Jacobian of the flow $X$ have to be proven. We refer to Lemma 3.1 in \cite{CCS} for the details. 

After proving the existence of renormalized solutions we consider the problem of uniqueness. Reproducing the formal calculation explained in the Introduction in this non-smooth setting requires some work. First, as in \cite{lions}, we have to prove that when we consider the difference of two renormalized solutions, this is a renormalized solution of \eqref{eqn:cont-damp} with initial datum $0$.
This is nontrivial due to the nonlinear nature of the definition of renormalized solution. Precisely, in \cite{CCS} the following Lemma has been proved. 
\begin{lemma}[Lemma 4.2, \cite{CCS}]\label{lemma:diff-renorm}
Let $b\in L^1(0,T; BV_{\rm loc} (\R^d; \R^d))$ be a vector field with $\nabla \cdot b \in L^1(0,T; L^1_{\rm loc}(\R^d))$. Let $c\in L^1( (0,T) \times \R^d)$
and let $u_0:\R^d \to \R$ be a measurable function.
Let $u_1$ and $u_2$ be renormalized solutions of \eqref{eqn:cont-damp} with initial datum $u_0$.
Then $u:= u_1-u_2$ is a renormalized solution with initial datum $0$.
\end{lemma}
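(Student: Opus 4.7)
The key difficulty is that renormalization is a nonlinear operation, so one cannot simply subtract the renormalized identities for $u_1$ and $u_2$. Moreover, $u_1$ and $u_2$ need not be locally integrable, ruling out a direct application of the DiPerna-Lions-Ambrosio regularization scheme to them. My plan is to extract bounded surrogates from each $u_i$ through the renormalization property with a truncating family $\{\beta_R\}$, to apply Ambrosio's commutator theorem to the bounded surrogates, and to remove the truncation at the end.

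Fix admissible renormalizers $\beta_R$ satisfying \eqref{defn:beta} with, say, $\beta_R(z)=z$ for $|z|\leq R$, $|\beta_R(z)|\leq R$, and $\beta_R(z)\to z$, $\beta_R'(z)\to 1$ as $R\to\infty$. Applying the renormalized identity for $u_i$ with $\beta_R$, the bounded function $v_{i,R}:=\beta_R(u_i)\in L^\infty$ solves, in the distributional sense,
\[
\p_t v_{i,R} + \n\cdot(b\, v_{i,R}) = S_{i,R}, \qquad S_{i,R} := c\,u_i\beta_R'(u_i) - (\n\cdot b)\bigl(u_i\beta_R'(u_i) - v_{i,R}\bigr),
\]
with $v_{i,R}|_{t=0}=\beta_R(u_0)$ and $S_{i,R}\in L^1_{\rm loc}((0,T)\times\R^d)$ for each fixed $R$. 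Subtracting gives that $w_R := v_{1,R}-v_{2,R}\in L^\infty$ solves a continuity equation with $L^1_{\rm loc}$ source and vanishing initial datum. Since $b\in L^1(0,T;BV_{\rm loc})$ and $\n\cdot b\in L^1_{\rm loc}$, the classical Ambrosio renormalization theorem applies to this bounded distributional solution and yields, for every admissible $\beta$ in the sense of \eqref{defn:beta},
\[
\p_t\beta(w_R) + \n\cdot(b\beta(w_R)) + (\n\cdot b)\bigl[w_R\beta'(w_R)-\beta(w_R)\bigr] = \beta'(w_R)(S_{1,R}-S_{2,R})
\]
in the sense of distributions, with $\beta(w_R)|_{t=0}=0$.

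The final step is the passage $R\to\infty$. Since $w_R\to u:=u_1-u_2$ pointwise a.e., $\beta(w_R)\to\beta(u)$ and $w_R\beta'(w_R)-\beta(w_R)\to u\beta'(u)-\beta(u)$ boundedly a.e.\ by continuity and admissibility of $\beta$, so all left-hand-side terms converge in $\mathcal{D}'$. For the source I introduce $\mathcal{U}_R := u_1\beta_R'(u_1)-u_2\beta_R'(u_2)$ and split $\mathcal{U}_R = w_R + (\mathcal{U}_R-w_R)$; the principal contribution $\beta'(w_R)\,c\,w_R$ is pointwise dominated by $\|z\beta'(z)\|_\infty\,|c|\in L^1$ thanks to the admissibility of $\beta$, and converges to $c\,u\,\beta'(u)$ by dominated convergence.

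The main obstacle lies in controlling the remaining defect terms $\beta'(w_R)\,c\,(\mathcal{U}_R-w_R)$ and $\beta'(w_R)(\n\cdot b)(\mathcal{U}_R-w_R)$. The defect $\mathcal{U}_R-w_R$ vanishes on $\{|u_1|,|u_2|\leq R\}$, which exhausts the domain as $R\to\infty$, but on its support it can grow as $O(R)$, so a uniform integrable majorant is not automatic. Overcoming this requires exploiting, for a carefully chosen profile $\beta_R$, the uniform integrability of the products $c\,u_i\beta_R'(u_i)$ and $(\n\cdot b)\,u_i\beta_R'(u_i)$ on the thin sets $\{|u_i|>R\}$ -- a control that can be extracted from the Lagrangian representation \eqref{eqn:cont-damp-sol-pushforw} and the compressibility bound \eqref{eqn:buona-def-JX}, which give pointwise a.e.\ finiteness of $u_i$ and integrability along the flow. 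Once this uniform integrability is in hand, dominated (or Vitali) convergence delivers \eqref{defn:renorm-sol-eq} for $u$ with zero initial datum, and the proof is complete.
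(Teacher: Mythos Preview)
The paper does not contain a proof of this lemma: it is merely quoted from~\cite{CCS} (as Lemma~4.2 there), so there is no ``paper's own proof'' to compare against. I therefore comment only on the internal soundness of your argument.

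Your overall scheme --- truncate each $u_i$ via an admissible $\beta_R$, obtain bounded distributional solutions $v_{i,R}=\beta_R(u_i)$ of a continuity equation with $L^1_{\rm loc}$ source, subtract, apply Ambrosio's renormalization to the bounded difference $w_R$, and then send $R\to\infty$ --- is the natural route and is presumably close to the argument in~\cite{CCS}. The difficulty you isolate, namely the control of the defect
\[
\beta'(w_R)\,(c-\nabla\cdot b)\,(\mathcal U_R-w_R),
\qquad
\mathcal U_R-w_R=\sum_{i=1}^2 (-1)^{i+1}\bigl(u_i\beta_R'(u_i)-\beta_R(u_i)\bigr),
\]
is exactly the heart of the matter.

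However, your proposed resolution of this difficulty is circular. You invoke the Lagrangian representation~\eqref{eqn:cont-damp-sol-pushforw} and the compressibility bound~\eqref{eqn:buona-def-JX} to extract uniform integrability of $c\,u_i\beta_R'(u_i)$ and $(\nabla\cdot b)\,u_i\beta_R'(u_i)$ on $\{|u_i|>R\}$. But~\eqref{eqn:cont-damp-sol-pushforw} is the formula for the \emph{particular} solution constructed via the flow; for \emph{arbitrary} renormalized solutions $u_1,u_2$ no such representation is available --- establishing it is precisely the content of the uniqueness theorem, which in turn rests on the present lemma. So you cannot use it here.

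Concretely, the obstruction is that on the set $\{|u_1|>R\}\cup\{|u_2|>R\}$ the defect $\mathcal U_R-w_R$ can be of order $R$, while from $c\in L^1$ and measurability of the $u_i$ you only get $\int_{\{|u_i|>R\}}|c|\to 0$ with no quantitative rate; hence $R\int_{\{|u_i|>R\}}|c|$ need not vanish. The passage $R\to\infty$ therefore requires a structural argument (for instance, a careful choice of $\beta_R$ combined with restricting first to renormalization functions $\beta$ with $\beta'$ compactly supported, exploiting that $|w_R|\leq M$ forces $|u_1|$ and $|u_2|$ to be simultaneously large or simultaneously moderate, and handling the two-variable chain rule accordingly), not an appeal to the explicit solution formula. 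As written, the proof is incomplete at this step.
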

At this point we argue following the lines of the formal computation explained in the Introduction. 
We consider the function 
\begin{equation*}
\Gamma_{\delta,R}(t) = \int_{\R^d} \varphi_R(x) \beta_\delta (u(t,x))\, dx,
\end{equation*}
where for any $\delta>0$
\begin{equation}
\label{defn:beta-delta}
\beta_\delta(r) = \log\Big(1+\frac{[\arctan(r)]^2}{\delta} \Big) \qquad \forall r\in \R
\end{equation}
and for any $R>0$ the function $\varphi_R$ is smooth and has a suitable decay at infinity. 
%\begin{equation}
%\label{defn:phiR}
%\varphi_R(x) =
%\begin{cases}
%\displaystyle{\frac{1}{2^{d+1}} }\qquad & x\in \R^d, \, |x|<R
%\\
%\displaystyle{\frac{R^{d+1}}{(R + |x|)^{d+1}} } \qquad & x\in \R^d, \, |x|>R
%.
%\end{cases}
%\end{equation}
Note that $\beta_\delta$ satisfies the hypothesis of the Definition \ref{defn:beta} and in particular 
\begin{equation}
\label{eqn:propr-gamma-de}
|r \beta'_\delta(r)| = \Big| \frac{2\arctan(r)}{\delta+ [\arctan(r)]^2} r\arctan'(r) \Big|
% \leq \Big|\frac{2r}{\arctan(r)(1+r^2)} \Big|
\leq 2 \qquad \forall r\in \R.
\end{equation} 
By taking the time derivative of $\Gamma_{\delta, R}(t)$ and using the definition of renormalized solution we get the following differential equality
 \begin{equation}
 \label{defn:renorm-sol-tested-2}
 \begin{split}
 \frac{d}{dt} \Gamma_{\delta,R}(t)
  = \int_{\R^d} \nabla\varphi_R \cdot b \, \beta_\delta(u) \, dx  +
\int_{\R^d} \varphi_R (c- \nabla \cdot b) u \beta_\delta'(u)  \, dx\\
+\int_{\R^d} \varphi_R \nabla \cdot b \, \beta_\delta(u) \, dx .
\end{split}
\end{equation}
By estimating carefully the right-hand side and using the fact that $\delta$ and $R$ are arbitrary we conclude, by using Gronwall Lemma, that $\Gamma_{\delta, R}$ is $0$. 
The rigorous statement of the result in \cite{CCS} is the following: 
\begin{theorem}\label{thm:main}
Let $b\in L^1(0,T; BV_{\rm loc} (\R^d; \R^d))$ be a vector field that satisfies a bound on the divergence 
$\nabla \cdot b \in L^1(0,T; L^\infty(\R^d))$ and the growth condition
\begin{equation}
\label{eqn:growth-b}
\frac{|b(x)|}{1+|x|} \in L^1(0,T; L^1(\R^d))+ L^1(0,T; L^\infty(\R^d)).
\end{equation}
Let 
$$c\in L^1( (0,T) \times \R^d)$$
and let $u_0:\R^d \to \R$ be a measurable function.
Then there exists a unique renormalized solution $u:[0,T] \times \R^d \to \R$ of \eqref{eqn:cont-damp} starting from $u_0$ and it is given by the formula 
\begin{equation}\label{eqn:cont-damp-sol-thm}
u(t,x)= \frac{u_0(X^{-1}(t,\cdot)(x))}{JX(t, X^{-1}(t,\cdot)(x))} \exp\Big({\int_0^t c(\tau, X(\tau, X^{-1}(t,\cdot)(x))) \, d\tau}\Big).
\end{equation}
\end{theorem}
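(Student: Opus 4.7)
The plan is to obtain existence by writing down the solution via the regular Lagrangian flow of $b$, and to obtain uniqueness by running the formal logarithmic argument from the Introduction rigorously on the arctangent-based renormalizers $\beta_\delta$ of \eqref{defn:beta-delta}, after reducing to the case of zero initial datum through Lemma~\ref{lemma:diff-renorm}.

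\textbf{Existence.} The assumptions on $b$ ($BV_{\rm loc}$, $L^\infty$ bound on the divergence, growth condition \eqref{eqn:growth-b}) place us in the Ambrosio--DiPerna--Lions framework, so a unique regular Lagrangian flow $X$ for \eqref{eqn:ODE} exists and enjoys the compression estimate underlying \eqref{eqn:buona-def-JX}. Hence the formula \eqref{eqn:cont-damp-sol-thm} makes sense pointwise almost everywhere. To verify the renormalized identity \eqref{defn:renorm-sol-tested-st} with an arbitrary admissible $\beta$ and $\phi\in C^\infty_c([0,T)\times\R^d)$, I would change variables along $X$ using the weak Jacobian machinery provided by Lemma~3.1 of \cite{CCS}: after this transformation, both the transport and the damping contributions reduce to a scalar ODE identity along each trajectory which is verified directly from \eqref{eqn:cont-damp-sol-thm}.

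\textbf{Uniqueness.} Let $u_1,u_2$ be two renormalized solutions with the same initial datum $u_0$. By Lemma~\ref{lemma:diff-renorm}, $u=u_1-u_2$ is a renormalized solution with initial datum $0$. I apply Definition~\ref{defn:renorm-sol} with $\beta=\beta_\delta$ and test function $\phi(t,x)=\varphi_R(x)\chi(t)$, where $\chi$ is a time cutoff approximating $\mathbf{1}_{[0,t_0]}$ and $\varphi_R$ is a smooth spatial cutoff tuned to the two-scale splitting of $|b|/(1+|x|)$ furnished by \eqref{eqn:growth-b}. Passing $\chi\to\mathbf{1}_{[0,t_0]}$ produces the differential identity \eqref{defn:renorm-sol-tested-2} for
\[
\Gamma_{\delta,R}(t)=\int_{\R^d}\varphi_R(x)\,\beta_\delta(u(t,x))\,dx,\qquad \Gamma_{\delta,R}(0)=0.
\]
On the right-hand side, the damping-plus-divergence integral is pointwise bounded by $2\varphi_R(|c|+|\nabla\cdot b|)$ thanks to \eqref{eqn:propr-gamma-de}, hence integrable in $(t,x)$ uniformly in $\delta$ (using $c\in L^1$, $\nabla\cdot b\in L^1(0,T;L^\infty)$, and the compact support of $\varphi_R$); the divergence term provides the Gronwall coefficient $\|\nabla\cdot b(t,\cdot)\|_{L^\infty}\,\Gamma_{\delta,R}(t)$; the transport term $\int\nabla\varphi_R\cdot b\,\beta_\delta(u)\,dx$ is controlled through the a priori bound $\beta_\delta\le\log(1+\pi^2/(4\delta))$ combined with \eqref{eqn:growth-b}. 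Gronwall's inequality yields a bound on $\Gamma_{\delta,R}(t)$ which, when $R\to\infty$ and $\delta\to 0$ are taken in the correct order, stays finite and independent of $\delta$; since $\beta_\delta(r)\to+\infty$ for every $r\neq 0$, Fatou forces $u\equiv 0$.

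The \textbf{main obstacle} is balancing the two limits $R\to\infty$ and $\delta\to 0$ against the transport term $\int\nabla\varphi_R\cdot b\,\beta_\delta(u)\,dx$: because the a priori upper bound for $\beta_\delta$ grows like $\log(1/\delta)$, these limits do not commute, and the delicate interplay between the decomposition \eqref{eqn:growth-b} and the scales built into $\varphi_R$ and $\nabla\varphi_R$ is what makes the argument succeed. The specific choice of $\beta_\delta$ through the bounded function $\arctan$ is also essential here: it ensures that $\beta_\delta\in L^\infty(\R)$ and $r\beta_\delta'(r)\in L^\infty(\R)$ for every $\delta>0$, so that $\beta_\delta$ is admissible in Definition~\ref{defn:renorm-sol} and $\Gamma_{\delta,R}$ remains finite throughout the argument, while one still retains the blow-up $\beta_\delta(r)\to+\infty$ as $\delta\to 0$ that drives the final contradiction.
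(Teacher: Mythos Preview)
Your proposal follows essentially the same route as the paper's sketch: existence via the explicit Lagrangian formula together with the weak-Jacobian lemma of \cite{CCS}, and uniqueness via Lemma~\ref{lemma:diff-renorm} followed by the $\beta_\delta$/Gronwall argument on $\Gamma_{\delta,R}$, with the correct identification of the three terms in \eqref{defn:renorm-sol-tested-2} and of the balancing of $R\to\infty$ against $\delta\to 0$ as the main obstacle.

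One point to correct: you describe $\varphi_R$ as compactly supported, but the paper (and \cite{CCS}) takes $\varphi_R$ with polynomial \emph{decay} at infinity, as in \eqref{defn:phiR}. The reason is the $L^\infty$ component $b_2$ in the splitting \eqref{eqn:growth-b}. With a standard compactly supported cutoff (say $\varphi_R=1$ on $B_R$, vanishing outside $B_{2R}$, $|\nabla\varphi_R|\le C/R$), the $b_2$-part of the transport term is controlled only by $Cb_2(t)\int_{R\le|x|\le 2R}\beta_\delta(u)\,dx$, and on this shell $\varphi_R$ may vanish, so this cannot be absorbed as a multiple of $\Gamma_{\delta,R}$ and hence cannot enter the Gronwall loop; nor does it go to zero as $R\to\infty$. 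The decaying choice instead yields the pointwise inequality $|\nabla\varphi_R(x)|\le C\,\varphi_R(x)/(1+|x|)$, which converts the $b_2$-contribution into $Cb_2(t)\,\Gamma_{\delta,R}(t)$ exactly as in \eqref{eqn:est-1}. With that adjustment your sketch matches the paper.
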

\section{The case of divergence of $b$ in $BMO$}\label{sec:bmo}
In \cite{bmo} the author proved existence and uniqueness of solutions of the transport equation when the divergence $\nabla \cdot b$ of the vector field $b$ is the sum of a function in $L^\infty$ and a compactly supported function of bounded mean oscillation (defined in the sequel). This result extends the previous theory of \cite{lions,ambrosio} (see also \cite{amcri-edi} and the references quoted therein), where $\nabla \cdot b$ is assumed to be bounded in space.
Uniqueness is the most delicate point and its proof is based on a new inequality for $BMO$ functions, which gives the differential inequality $ \dot{\mathcal{E}}(t) \leq \mathcal{E}(t)|\log(\mathcal{E}(t))|$ in $[0,T]$, where $\mathcal{E}(t)$ is the $L^{2}$ norm of the difference of two solutions of the continuity equation \eqref{eqn:transp}. Then, uniqueness follows by Gronwall Lemma.

In this section we provide an alternative proof of this uniqueness result, in which we also somewhat refine the hypothesis on the compact support of the divergence, and allow general growth conditions on the vector field.

A similar result was proved previously in~\cite{des}. In such paper, the divergence of the vector field $b$ is assumed to be the sum of a bounded term and a term in an exponential space, namely 
\begin{equation}\label{eqn:div-expL}
\nabla \cdot b \in L^1(0,T; L^\infty(\R^d)) + L^1(0,T; {\rm Exp \,} L (\R^d)),
\end{equation}
where ${\rm Exp \,} L $ denotes the Orlicz space of globally exponentially integrable functions.
This condition on the divergence is more general than the one of \cite{bmo}, since compactly supported functions of bounded mean oscillation belong to the space ${\rm Exp \,} L (\R^d)$. However, the same proof of \cite{bmo}, as well as the one presented below, works when the divergence of $b$ satisfies the condition \eqref{eqn:div-expL}.

In a very recent paper~\cite{clop}, the authors improved the results in \cite{des,bmo}. They consider vector fields whose divergence is not necessarily compactly supported, and satisfies a weaker condition than~\eqref{eqn:div-expL}, expressed in terms of the Orlicz spaces $ {\rm Exp} \big( \frac{L}{\log^\gamma L} \big)$  for $\gamma \in (0,\infty)$. In particular, they show that if the vector field is locally Sobolev, satisfies the classical growth conditions, and
\begin{equation}
\label{eqn:Llog}
\nabla \cdot b \in L^1(0,T; L^\infty(\R^d)) + L^1\Big(0,T; {\rm Exp \,}\Big( \frac{L}{\log^\gamma L} \Big) (\R^d)\Big)
\end{equation}
for $\gamma=1$, then there is existence and uniqueness of bounded solutions of the continuity equation with given initial datum. The proof of their result is based on the differential inequality $ \dot{\mathcal{E}}(t) \leq \mathcal{E}(t) |\log(\mathcal{E}(t))| \log |\log(\mathcal{E}(t))|$ in $[0,T]$, which allows to apply Gronwall Lemma. 
They also adapt a counterexample of DiPerna and Lions~\cite{lions} to show that, for any exponent $\gamma>1$, the condition \eqref{eqn:Llog} is not enough to guarantee the uniqueness.

Before stating our result, we recall the definition and the properties of functions of bounded mean oscillation.
\begin{definition}
Given a locally integrable function $f :\R^d \to \R$ we consider its average on $B_r(x)$
$$(f)_{B_r(x)} = \negint_{B_r(x)} f(y) \, dy= \frac{1}{|B_r(x)|} \int_{B_r(x)} f(y) \, dy$$
and its mean oscillation in $B_r(x)$
$$\negint_{B_r(x)} |f(y) - (f)_{B_r(x)} | \, dy.$$
We say that $f$ is a function of bounded mean oscillation (BMO) if
\begin{equation}
\label{eqn:bmp-norm}
\sup_{r>0,\, x\in \R^d} \negint_{B_r(x)} |f(x) - (f)_{B_r(x)} | \, dx <\infty.
\end{equation}
\end{definition}
A natural norm $\| \cdot \|_{*}$ on the quotient space of $BMO$ functions modulo the space of constant functions is given by the quantity in \eqref{eqn:bmp-norm}.
%With a slight abuse of notation, for every $M>0$ we denote by $BMO_c(B_M)$ the space of $BMO$ functions $f:\R^d \to \R$ whose support is contained in $B_M$, endowed with the norm $\| \cdot \|_{*}$.
From \cite[Lemma 1]{jn} we have that 
\begin{equation}\label{eqn:bmo-jonir2}
\Leb{d}\big( \{x\in K:  |f - ( f )_{K}| >r \} \big) \leq\frac{A}{\|f\|_{*}}\exp{(-\frac{br}{\|f\|_{*}})}\int_{K} |f - ( f )_{K}|\,dx
\end{equation}
for any $r>a\|f\|_{*}$, where $K$ is an arbitrary cube in $\R^d$ and $A, a, b$ are some universal constants depending only on the dimension $d$.   

In the following lemma we present the properties of $BMO$ functions which are used in the proof of Theorem \ref{thm:bmo}; in particular, we prove the exponential decay of the integral of $f$ on its superlevels.

\begin{lemma}\label{lemma:bmo-dec}
Let $f \in (BMO \cap L^1)(\R^d)$ be a nonnegative function. Then there exist $C, c>0$, depending only on $d$, such that for every $\lambda> a$
\begin{equation}
\label{eqn:bmo-est}
\int_{\R^d}{\big( f(x) - \lambda( \| f \|_{1}+\| f \|_{*}) \big)_+} \, dx \leq C \exp(-c\lambda)\| f \|_{1}.
\end{equation}
\end{lemma}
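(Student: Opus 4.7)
The plan is to deduce this from the John--Nirenberg estimate \eqref{eqn:bmo-jonir2} applied on a disjoint cubic tiling of $\R^d$ whose side length is tuned so that the local averages $(f)_K$ are uniformly controlled by $\|f\|_*$. Assume $\|f\|_*>0$ (otherwise $f$ is a.e.\ constant, hence identically zero since $f\geq 0$ and $f\in L^1$). Set $L:=(\|f\|_1/\|f\|_*)^{1/d}$ and cover $\R^d$ by a disjoint grid of cubes $\{K_i\}$ of side length $L$. Since $f\geq 0$, one has for every $i$
$$(f)_{K_i}\leq \frac{1}{|K_i|}\int_{\R^d}f(y)\,dy=\frac{\|f\|_1}{L^d}=\|f\|_*.$$

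Write $M_\lambda:=\lambda(\|f\|_1+\|f\|_*)$ and fix $\lambda>a+1$, so that $M_\lambda+s-(f)_{K_i}>a\|f\|_*$ for every $s\geq 0$ and every $i$. Combining the layer cake formula with \eqref{eqn:bmo-jonir2} applied on $K=K_i$ yields
\begin{align*}
\int_{K_i}(f-M_\lambda)_+\,dx
&=\int_0^\infty \Leb{d}\big(\{x\in K_i: f(x)>M_\lambda+s\}\big)\,ds\\
&\leq\int_0^\infty\frac{A}{\|f\|_*}\exp\!\Big(-\frac{b(M_\lambda+s-(f)_{K_i})}{\|f\|_*}\Big)\,ds\cdot\int_{K_i}|f-(f)_{K_i}|\,dx\\
&=\frac{A}{b}\exp\!\Big(-\frac{b(M_\lambda-(f)_{K_i})}{\|f\|_*}\Big)\int_{K_i}|f-(f)_{K_i}|\,dx.
\end{align*}
Using $(f)_{K_i}\leq\|f\|_*$ together with the resulting bound $(M_\lambda-(f)_{K_i})/\|f\|_*\geq\lambda-1$, and the trivial inequality $\int_{K_i}|f-(f)_{K_i}|\leq 2\int_{K_i}f$ (valid since $f\geq 0$), one arrives at
$$\int_{K_i}(f-M_\lambda)_+\,dx\leq \frac{2A}{b}e^{-b(\lambda-1)}\|f\|_{L^1(K_i)}.$$
Summing over $i$ and setting $C:=2Ae^b/b$, $c:=b$ produces $\int_{\R^d}(f-M_\lambda)_+\,dx\leq Ce^{-c\lambda}\|f\|_1$; the residual range $\lambda\in(a,a+1]$ is absorbed into the constants by enlarging $a$, which depends only on the dimension.

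The only nontrivial step is the calibration of the scale $L=(\|f\|_1/\|f\|_*)^{1/d}$: precisely this choice makes $(f)_{K_i}\leq\|f\|_*$ hold uniformly over the tiling, so that the John--Nirenberg exponential decay in \eqref{eqn:bmo-jonir2} is triggered at the correct threshold $\lambda(\|f\|_1+\|f\|_*)$ dictated by the statement. Once this scale is fixed, everything else reduces to a mechanical layer cake computation followed by summation over the disjoint cubes; the factor $\|f\|_1$ on the right-hand side then arises naturally as $\sum_i\|f\|_{L^1(K_i)}$.
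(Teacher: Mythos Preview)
Your argument is correct. The only cosmetic slip is in the last sentence: the residual range $\lambda\in(a,a+1]$ should be absorbed by enlarging $C$ (using the trivial bound $\int(f-M_\lambda)_+\leq\|f\|_1$ together with $e^{-c\lambda}\geq e^{-c(a+1)}$ on that range), not by ``enlarging $a$'', since $a$ is the fixed John--Nirenberg constant appearing in the hypothesis $\lambda>a$.

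Your route differs from the paper's in how you pass from local cubes to all of $\R^d$. The paper works with a \emph{single} cube $K$ of measure larger than $a^{-1}$, uses the global integrability to get $(f)_K\leq a\|f\|_1$, applies John--Nirenberg and the layer cake on $K$ to obtain a bound independent of $K$, and then lets $K$ exhaust $\R^d$. You instead tile $\R^d$ by a disjoint grid at the calibrated scale $L=(\|f\|_1/\|f\|_*)^{1/d}$, which forces $(f)_{K_i}\leq\|f\|_*$ uniformly, and then sum. The paper's approach is a touch more economical: it avoids choosing a special scale, needs no summation, and works directly for all $\lambda>a$. Your tiling argument, on the other hand, makes the role of the two norms $\|f\|_1$ and $\|f\|_*$ more transparent (the scale $L$ is exactly where they balance) and would generalise more readily to situations where one wants estimates localised to subregions.
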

\begin{proof}
Let $K$ be any cube such that $\Leb{d}(K)>a^{-1}$.
Since the function $f$ is globally integrable, we have
\begin{equation}
\label{eqn:bmo-average}
( f)_{K} \leq \frac{ \|f\|_{1}}{\Leb{d}(K)} \leq  a{ \|f\|_{1}}\,.
\end{equation}
Thanks to \eqref{eqn:bmo-average}, for every $\lambda>a$  we have that $ \lambda \| f \|_{1}> (f)_{K}$, so that
$$\big( f(x) - \lambda( \| f \|_{1}+\| f \|_{*}) \big)_+ \leq \big( f(x) - (f)_{K} \big)_+ \qquad \forall x\in \R^d$$
 and similarly
$$\big\{ x\in K : f(x)> \lambda( \| f \|_{1}+\| f \|_{*}) \big\} \subseteq \big\{ x\in K : f(x)- (f)_{{K}}> \lambda \| f \|_{*} \big\}.$$
 Using also \eqref{eqn:bmo-jonir2}, we deduce that 
\begin{equation}
\label{eqn:bmo-sotto}
\begin{split}
&\hspace{-3em} \int_{K}{\big( f(x) - \lambda( \| f \|_{1}+\| f \|_{*}) \big)_+} \, dx \\
 &\leq 
 \int_{  \{x\in K \,:\, f- ( f)_{K}>  \lambda \| f \|_{*}\}
 }{\big( f(x) - ( f)_{K} \big)_+} \, dx 
 \\
 &\leq
 \int_{\lambda\| f \|_{*}}^\infty { \Leb{d}\big( \{ x\in K : f(x) - ( f)_{K}> r \} \big)}\, dr
 \\
 & \; \; +\lambda\|f\|_{*}\Leb{d}\big( \{ x\in K : f(x) - ( f)_{K}> \lambda\|f\|_{*} \} \big)
 \\
 &\leq
\frac{2A}{\|f\|_{*}}\|f\|_{1}\Big(\lambda\|f\|_{*}\exp(-b\lambda)+\int_{\lambda\|f\|_{*}}^{\infty}\exp(-\frac{br}{\|f\|_{*}})\,dr\Big)
 \\
 &=\frac{2A}{b}\|f\|_{1}(b\lambda+1)\exp(-b\lambda)
 \\
 &\leq
 C\|f\|_1\exp(-c\lambda)
 \end{split}
\end{equation}
for some constant $C,c$ depending only on the dimension $d$, with $c<b$. Since the cube $K$ is arbitrary, taking the supremum over all admissible $K$ we get \eqref{eqn:bmo-est}.
%In turn, this allows to estimate the quantity in \eqref{eqn:bmo-est} by splitting the integral on concentric annuli, each of radii $2^{k}$ and $2^{k-1}$, where the function $\max\{ 1, |x|^{d+1} \}^{-1}$ is controlled by $2^{-(k-1) (d+1)}$:
%\begin{equation*}
%\begin{split}
%&\int_{\R^d}\frac{1}{\max\{ 1, |x|^{d+1} \}} {\big( f(x) - \lambda( \| f \|_{1}+\| f \|_{*}) \big)_+} \, dx 
%\\&\leq \int_{B_1}{\big( f(x) - \lambda( \| f \|_{1}+\| f \|_{*}) \big)_+} \, dx \\
%& \;\; +
%\sum_{k=1}^\infty \int_{B_{2^{k}} \setminus B_{2^{k-1}}}\frac{1}{\max\{ 1, |x|^{d+1} \}} {\big( f(x) - \lambda( \| f \|_{1}+\| f \|_{*}) \big)_+} \, dx 
%\\
%&\leq \sum_{k=0}^\infty \frac{1}{2^{(k-1)(d+1)}}\int_{B_{2^{k}}}{\big( f(x) - \lambda \| f \|_{*} \big)_+} \, dx
%\\
%&\leq  \frac{C_{JN} \Leb{d}(B_1) }{c_{JN}} \|f\|_{*} \exp\big(-{c_{JN} \lambda}\big) \sum_{k=0}^\infty \frac{1}{2^{k-d-1}}.
%%\\&=
%%\sum_{k=0}^\infty \frac{1}{2^{k-1}}\int_{B_{2^{k+1}}}{\big( f(x) - \lambda \| f \|_{*} \big)_+} \, dx 
%\end{split}
%\end{equation*}
%This proves \eqref{eqn:bmo-est}.
\end{proof}

In the following, we prove that the continuity equation
\begin{equation}
\label{eqn:transp}
\partial_t u+ \nabla \cdot( bu) = 0
\end{equation}
 with a $BV$ vector field with divergence in $(BMO\cap L^1)(\R^d)+L^\infty(\R^d)$ is well-posed in the class of bounded distributional solutions. We recall that the space $(BMO\cap L^1)(\R^d)$ is naturally endowed with the norm $\|\cdot\|_{1}+\|\cdot\|_{*}$ and that a  function $u \in L^\infty( (0,T) \times \R^d)$ is a distributional solution of~\eqref{eqn:transp} with initial datum $u_0 \in L^\infty(\R^d)$ if  for every $\phi \in C^{\infty}_c ( [0,T) \times \R^d)$
 \begin{equation*}
\int_{\R^d} \phi(0,x) u_0(x)\, dx +
 \int_0^T \! \int_{\R^d} [\partial_t \phi(t,x) + \nabla\phi (t,x) \cdot b(t,x) ]u(t,x) \, dx \, dt  \, =0.
\end{equation*}
 
\begin{theorem}\label{thm:bmo}
Let $u_0 \in L^\infty(\R^d)$ and $b\in L^1(0,T; BV_{\rm loc} (\R^d;\R^d))$ a vector field such that
\begin{equation}
\label{eqn:bmo-div-b}
\nabla \cdot b \in L^1(0,T; L^\infty(\R^d)) + L^1(0,T;(BMO\cap L^1)(\R^d)),
\end{equation}
%%\begin{equation}
%\label{eqn:growth-b}
%\frac{|b(t,x)|}{1+|x|} \in L^1((0,T); L^1(\R^d))+ L^1((0,T); L^\infty(\R^d)).
%\end{equation}
and such that there exist two nonnegative functions $b_1$ and $b_2$ with
\begin{equation}\label{e:gro}
\frac{|b(t,x)|}{1+|x|} \leq b_1(t,x)+b_2(t),  \qquad b_1\in L^1(0,T; L^1(\R^d)), \quad b_2 \in L^1(0,T).
\end{equation}
%and
%\begin{equation}\label{e:Re}
%\liminf_{R \to \infty} \, R^{\ep} \int_{0}^T \int_{\R^d \setminus B_R} b_1 \, dx \, dt = 0.
%\end{equation}
Then there exists a unique distributional solution $u\in L^\infty( (0,T) \times \R^d)$ of \eqref{eqn:transp}
with initial datum $u_0$.
\end{theorem}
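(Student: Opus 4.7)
The proof splits into existence and uniqueness. Existence follows the classical recipe: mollify $b$ and $u_0$ to smooth, compactly supported data, solve the approximate Cauchy problems by the method of characteristics, and pass to the weak-$*$ limit in $L^{\infty}((0,T)\times \R^d)$, using \eqref{e:gro} to control the approximate flows and the maximum principle for transport to keep the uniform bound $\|u_n\|_\infty\leq\|u_{0,n}\|_\infty\leq\|u_0\|_\infty$; the assumption \eqref{eqn:bmo-div-b} is then enough to identify the limit as a bounded distributional solution. The heart of the theorem is uniqueness, which by linearity reduces to showing that a bounded distributional solution $u$ of \eqref{eqn:transp} with $u_0=0$ must vanish. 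Since $b\in L^{1}(0,T;BV_{\rm loc})$ and $\nabla\cdot b\in L^{1}_{\rm loc}((0,T)\times\R^d)$, the Ambrosio renormalization principle gives that $u$ is renormalized: for every $\beta$ as in \eqref{defn:beta},
$$\partial_{t}\beta(u)+\nabla\cdot(b\beta(u))+\bigl(u\beta'(u)-\beta(u)\bigr)\nabla\cdot b=0\qquad\text{in }\mathcal{D}'.$$

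The plan is then to mimic the logarithmic strategy of Section \ref{sec:2}. Test the renormalized identity with $\beta=\beta_\delta$ from \eqref{defn:beta-delta} against a spatial cutoff $\varphi_R$ compatible with \eqref{e:gro}, and set $\Gamma_{\delta,R}(t)=\int\varphi_R(x)\beta_\delta(u(t,x))\,dx$. The analog of \eqref{defn:renorm-sol-tested-2} with $c\equiv 0$ reads
$$\frac{d}{dt}\Gamma_{\delta,R}(t)=\int b\cdot\nabla\varphi_R\,\beta_\delta(u)\,dx+\int\varphi_R\bigl[\beta_\delta(u)-u\beta'_\delta(u)\bigr]\nabla\cdot b\,dx.$$
Split $\nabla\cdot b=g_1+g_2$ according to \eqref{eqn:bmo-div-b}. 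The $g_1\in L^\infty$-contribution and the $\nabla\varphi_R$-boundary term are handled by adapting the truncation procedure of \cite{CCS} tied to the growth condition \eqref{e:gro}. The genuinely new ingredient is the control of $\int\varphi_R\beta_\delta(u)|g_2|$, where we invoke Lemma \ref{lemma:bmo-dec}: for any $\lambda>a$, decompose $|g_2|$ as the sum of its truncation at level $\mu=\lambda(\|g_2\|_1+2\|g_2\|_*)$ and its tail, and apply the exponential superlevel estimate \eqref{eqn:bmo-est} to the nonnegative $BMO\cap L^1$ function $|g_2|$. Since $u\in L^\infty$, $\|\beta_\delta(u)\|_\infty\leq M_\delta\leq C(1+|\log\delta|)$, and we obtain
$$\int\varphi_R\beta_\delta(u)|g_2|\,dx\leq 2\lambda\|g_2\|_{BMO\cap L^1}\,\Gamma_{\delta,R}+CM_\delta e^{-c\lambda}\|g_2\|_1,$$
yielding the differential inequality
$$\dot\Gamma_{\delta,R}(t)\leq\bigl(\|g_1(t)\|_\infty+2\lambda\|g_2(t)\|_{BMO\cap L^1}\bigr)\Gamma_{\delta,R}(t)+CM_\delta e^{-c\lambda}\|g_2(t)\|_1+\mathcal{E}_R(t),$$
with $\mathcal{E}_R$ a forcing coming from the $g_1$ and $\nabla\varphi_R$ terms, controlled by the truncation of \cite{CCS}.

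The argument then closes by a calibrated Gronwall and a time iteration. Restrict first to a subinterval $[0,T_0]\subset[0,T]$ on which $\int_0^{T_0}\|g_2(t)\|_{BMO\cap L^1}\,dt<c/4$, and choose $\lambda=c^{-1}\log M_\delta$: then $M_\delta e^{-c\lambda}=1$, while the Gronwall exponential is at most $C\exp(\tfrac{1}{2}\log M_\delta)=CM_\delta^{1/2}$, so $\Gamma_{\delta,R}(T_0)\leq K_RM_\delta^{1/2}$ with $K_R$ independent of $\delta$. Since for every $\eta>0$ and $\delta$ small $\beta_\delta(u(T_0,\cdot))\geq\tfrac{1}{2}M_\delta$ on $\{|u(T_0,\cdot)|\geq\eta\}$, a Fatou argument gives $|\{|u(T_0,\cdot)|\geq\eta\}\cap\{\varphi_R\geq 1/2\}|\leq CK_R/M_\delta^{1/2}\to 0$ as $\delta\to 0$, hence $u(T_0,\cdot)\equiv 0$ on $\{\varphi_R\geq 1/2\}$; letting $R\to\infty$ and $\eta\to 0^+$ yields $u(T_0,\cdot)\equiv 0$. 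A finite iteration on a partition of $[0,T]$ into such subintervals closes the proof. The main obstacle is precisely the balance just described: Lemma \ref{lemma:bmo-dec} produces an exponential gain in $\lambda$, but the Gronwall coefficient grows linearly in $\lambda$ and $M_\delta$ diverges logarithmically in $\delta$; the trade-off closes only on small subintervals where $\int\|g_2\|_{BMO\cap L^1}\,dt$ is sufficiently small relative to the universal constant $c$ from \eqref{eqn:bmo-est}, whence the iteration. A secondary technical point is the handling of the $\nabla\varphi_R$-boundary term and of the $g_1$-forcing under the mere growth \eqref{e:gro}, for which one recycles the truncation argument of \cite{CCS}.
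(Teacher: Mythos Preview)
Your overall strategy coincides with the paper's: renormalize with the logarithmic $\beta_\delta$ of \eqref{defn:beta-delta}, test with a decaying cutoff $\varphi_R$, split $\nabla\cdot b=g_1+g_2$ according to \eqref{eqn:bmo-div-b}, feed the $g_2$-part into Lemma~\ref{lemma:bmo-dec} at a level $\lambda$, close by Gronwall on a short subinterval where $\int\|g_2\|_{BMO\cap L^1}$ is small, and iterate. The paper does exactly this; the only structural difference is the endgame. The paper fixes $\lambda$ large but \emph{independent of $\delta$}, argues by contradiction (assuming $u(t,\cdot)\neq 0$ on a set of measure $m$), then chooses $R$ large so that the boundary forcing has coefficient $\leq m/2^{d+3}$ in front of $M_\delta=\log(1+\pi^2/(4\delta))$, and finally divides by $\log(1/\delta)$ and lets $\delta\to 0$ to reach the numeric contradiction $m/2^{d+1}\leq m/2^{d+2}$.

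Your calibration $\lambda=c^{-1}\log M_\delta$ introduces a small but genuine slip. With that choice the Gronwall exponential is of order $M_\delta^{1/2}$, and the boundary forcing coming from the $b_1$-part of the $\nabla\varphi_R$ term is not a constant but $(d+1)M_\delta\int_0^{T_0}\|b_1(s,\cdot)\|_{L^1(\R^d\setminus B_R)}\,ds$ (this $M_\delta$ is unavoidable: outside $B_R$ you must bound $\beta_\delta(u)$ by its sup). After Gronwall this term contributes $\sim M_\delta^{3/2}\|b_1\|_{L^1((0,T_0)\times(\R^d\setminus B_R))}$, which dominates the lower bound $\tfrac12 M_\delta\cdot m$ as $\delta\to 0$ for any fixed $R$; hence your ``$K_R$ independent of $\delta$'' is not justified and your order of limits ($\delta\to 0$ first, then $R\to\infty$) does not close. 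A similar (milder) issue arises with the term $2\lambda\|g_2\|_{BMO\cap L^1}\|\varphi_R\|_{L^1}$ produced by the $u\beta_\delta'(u)$-piece against the truncated $g_2$, which you did not record. The remedy is either to follow the paper and keep $\lambda$ fixed independently of $\delta$ (so that $\exp(A_\lambda)$ is a constant and the contradiction reduces to comparing the coefficients of $M_\delta$), or to let $R=R(\delta)\to\infty$ fast enough that $M_\delta^{1/2}\|b_1\|_{L^1((0,T_0)\times(\R^d\setminus B_{R(\delta)}))}\to 0$; either fix is straightforward.
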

Existence is obtained through a standard regularization argument, see \cite{ambrosio}\break or~\cite[Appendix A2]{bmo}, and we omit the proof.
The result can be also generalized adding a right-hand side of the form $cu$, for some $c \in L^1((0,T)\times \R^d)$, with the same ideas as in Section~\ref{sec:2}, and we would have to consider renormalized solutions in place of distributional solutions.

The proof of uniqueness in Theorem~\ref{thm:bmo} starts by considering the difference of two distributional solutions with the same datum, which is a distributional (and hence renormalized, by \cite{ambrosio}) solution with initial datum $0$. As in the proof of Theorem~\ref{thm:main}, the choice of the particular renormalization function \eqref{defn:beta-delta}, together with a Lipschitz, decaying test function, allows to get a contradiction thanks to Gronwall Lemma.
Before proving the result, we state a lemma which allows to use a Lipschitz, decaying space function in place of smooth, compactly supported functions as a test function in the definition of renormalization.
\begin{lemma}\label{lemma:test-fund} 
Let $C>0$ and let  $b$ be as in Theorem~\ref{thm:bmo}. Let $u$ be a renormalized solution of \eqref{eqn:transp}
with initial datum $0$, let $\beta$ be a renormalization function satisfying~\eqref{defn:beta},
and let $\varphi \in W^{1,\infty}(\R^d)$ be a function such that
 \begin{equation}
 \label{defn:decay-test}
|\varphi(x)| \leq \frac{C}{(1+|x|)^{d+1}},
\qquad |\nabla \varphi(x)| \leq \frac{C}{(1+|x|)^{d+2}} \qquad a.e.\,\,\,\, x\in \R^d.
\end{equation}
Then the function $\int_{\R^d} \varphi(x) \beta (u(t,x))\, dx
$ coincides a.e. with an absolutely continuous function $\Gamma(t)$ such that
$\Gamma(0)=0 $
   and for a.e. $t\in [0,T]$
 \begin{equation}
 \label{defn:renorm-sol-tested-many-functions}
 \begin{split}
 \frac{d}{dt} \Gamma(t) 
 = \int_{\R^d} \nabla\varphi \cdot b\beta(u) \, dx  +
\int_{\R^d} \varphi \Big[ \nabla \cdot b \big(\beta(u)-u \beta'(u)\big) \Big] \, dx .
\end{split}
\end{equation}
\end{lemma}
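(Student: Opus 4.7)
The plan is to derive the identity \eqref{defn:renorm-sol-tested-many-functions} from the distributional formulation \eqref{defn:renorm-sol-tested-st} by approximating the decaying Lipschitz function $\varphi$ with a sequence of compactly supported smooth functions that inherit the pointwise decay in \eqref{defn:decay-test}, and then passing to the limit by dominated convergence. The initial condition $\Gamma(0) = 0$ will come for free from the vanishing datum $u_0 \equiv 0$ combined with $\beta(0) = 0$.

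For the approximation, fix a cut-off $\eta \in C_c^\infty(\R^d)$ with $\eta \equiv 1$ on $B_1$, $\eta \equiv 0$ off $B_2$, and $|\nabla \eta| \leq C$, set $\eta_n(x) = \eta(x/n)$, and define $\varphi_n = (\varphi \, \eta_n) \ast \rho_{\epsilon_n}$ for a sequence of standard mollifiers with $\epsilon_n \to 0$ fast enough. Since $\nabla \eta_n$ is supported where $|x| \sim n$, on that annulus $1/n \leq C/(1+|x|)$, so the product rule combined with \eqref{defn:decay-test} gives $|\nabla(\varphi \eta_n)| \leq C (1+|x|)^{-d-2}$. After mollification (which at the cost of constants does not disturb power-decay estimates) one obtains $\varphi_n \in C_c^\infty(\R^d)$ satisfying uniformly $|\varphi_n(x)| \leq C'(1+|x|)^{-d-1}$, $|\nabla \varphi_n(x)| \leq C'(1+|x|)^{-d-2}$, together with $\varphi_n \to \varphi$ and $\nabla \varphi_n \to \nabla \varphi$ almost everywhere.

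With this in hand, I test \eqref{defn:renorm-sol-tested-st} (taking $c \equiv 0$, since $u$ solves \eqref{eqn:transp}) against $\phi(t,x) = \psi(t) \varphi_n(x)$ for arbitrary $\psi \in C_c^\infty([0,T))$. The initial term $\psi(0) \int \varphi_n \beta(u_0) \, dx$ vanishes identically because $u_0 = 0$ and $\beta(0) = 0$, and Fubini reduces the identity to
\begin{equation*}
\int_0^T \psi'(t) G_n(t) \, dt + \int_0^T \psi(t) H_n(t) \, dt = 0,
\end{equation*}
where $G_n(t) = \int_{\R^d} \varphi_n \, \beta(u(t,\cdot)) \, dx$ and $H_n(t)$ is the spatial integral on the right-hand side of \eqref{defn:renorm-sol-tested-many-functions} with $\varphi_n$ in place of $\varphi$.

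The main obstacle is supplying integrable dominations uniform in $n$ so that $G_n$ and $H_n$ converge in $L^1(0,T)$. Using the boundedness of $\beta$ and of $r \beta'(r)$ built into \eqref{defn:beta}, together with the growth bound \eqref{e:gro}, one gets
\begin{equation*}
|\nabla \varphi_n \cdot b \, \beta(u)| \leq C \, \frac{b_1(t,x) + b_2(t)}{(1+|x|)^{d+1}},
\end{equation*}
whose spatial integral lies in $L^1(0,T)$ thanks to the integrability of $(1+|x|)^{-d-1}$ over $\R^d$. Splitting $\nabla \cdot b = g_1 + g_2$ according to \eqref{eqn:bmo-div-b}, with $g_1 \in L^1_t L^\infty_x$ and $g_2 \in L^1_t (BMO \cap L^1)_x$, the second integrand is dominated by $C(|g_1| (1+|x|)^{-d-1} + |g_2| \|\varphi\|_\infty)$, again in $L^1_{t,x}$. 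Passing to the limit by dominated convergence yields
\begin{equation*}
\int_0^T \psi'(t) G(t) \, dt + \int_0^T \psi(t) H(t) \, dt = 0 \qquad \forall \psi \in C_c^\infty([0,T)),
\end{equation*}
with $H \in L^1(0,T)$. Since this identity is valid also for $\psi$ with $\psi(0)\neq 0$, a standard integration-by-parts argument identifies $G$ almost everywhere with the absolutely continuous primitive $\Gamma(t) = \int_0^t H(s) \, ds$, forcing in particular $\Gamma(0) = 0$, which is precisely the conclusion of the lemma.
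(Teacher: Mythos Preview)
Your proof is correct and follows essentially the same approach as the paper's: approximate $\varphi$ by smooth compactly supported functions obeying the same uniform decay \eqref{defn:decay-test}, test the renormalized equation, and pass to the limit by dominated convergence using the growth condition on $b$ together with the fact that $\nabla\cdot b$ splits as a bounded plus an $L^1$ function. The paper's proof is terser (it cites the reference for testing with purely spatial functions and does not spell out the dominating functions), whereas you make the construction of $\varphi_n$ and the dominations explicit, but there is no genuine methodological difference.
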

\begin{proof}
The proof is a standard argument via approximation. Consider a sequence of smooth, compactly supported functions $\varphi_n$ satisfying the same decay \eqref{defn:decay-test} with $C$ independent on $n$ and approximating $\varphi$ strongly in $W^{1,1}_{\rm loc}$. %(possibly doubling the constant $C$)
By a well-known observation (see \cite[Remark 2.6]{CCS}), we can  test \eqref{defn:renorm-sol-eq} with the space function $\varphi_n$ to obtain that, for every $n\in \N$, the function $t \to \int_{\R^d} \varphi_n(x) \beta (u(t,x))\, dx $ coincides for a.e. $t \in [0,T]$ with an absolutely continuous function $\Gamma_n(t)$ which satisfies $\Gamma_n(0) = 0 $ and
 \begin{equation}
 \label{defn:renorm-sol-tested-many-functions-n}
 \frac{d}{dt} \Gamma_n(t) 
 = \int_{\R^d} \nabla\varphi_n \cdot b \, \beta(u) \, dx  +
\int_{\R^d} \varphi_n \Big[ \nabla \cdot b \big(\beta(u)-u \beta'(u)\big) \Big] \, dx .
\end{equation}
Thanks to  \eqref{defn:decay-test}, to the growth assumptions on $b$, and to the fact that $\nabla \cdot b$ is the sum of a bounded and an integrable function (by \eqref{eqn:bmo-div-b} and \eqref{eqn:bmo-average}), by dominated convergence the right-hand side of \eqref{defn:renorm-sol-tested-many-functions-n} converges to the right-hand side of \eqref{defn:renorm-sol-tested-many-functions} in $L^1(0,T)$.
Moreover by dominated convergence for a.e. $t\in [0,T]$
 \begin{equation}
 \label{defn:conv-gamma}
\Gamma(t) = \lim_{n\to \infty} \Gamma_n(t) = \lim_{n\to \infty} \int_{\R^d} \varphi_n(x) \beta (u(t,x))\, dx = \int_{\R^d} \varphi(x) \beta (u(t,x))\, dx.
\end{equation}
Hence the functions $\Gamma_n$ pointwise converge to the absolutely continuous function $\Gamma$, formula \eqref{defn:renorm-sol-tested-many-functions} holds, and $\Gamma (0)= 0$.
\end{proof}

\begin{proof}[Proof of uniqueness]
Given $R,\delta>0$, we consider the function $ \beta_\delta$
defined as in~\eqref{defn:beta-delta} and we define
\begin{equation}
\label{defn:phiR}
\varphi_R(x) =
\begin{cases}
\displaystyle{\frac{1}{2^{d+1}} }\qquad & x\in \R^d, \, |x|<R
\\
\displaystyle{\frac{R^{d+1}}{(R + |x|)^{d+1}} } \qquad & x\in \R^d, \, |x|>R
.
\end{cases}
\end{equation}
By the linearity of the continuity equation \eqref{eqn:transp}, up to taking the difference of two distributional solutions with the same initial datum, it is enough to show that any distributional solution $u$ with initial datum $0$ is constantly $0$. Thanks to the assumptions on $b$ (in particular, the local regularity $b\in L^1(0,T; BV_{\rm loc} (\R^d;\R^d))$ and the assumption on the divergence \eqref{eqn:bmo-div-b}, which implies that  $\nabla \cdot b \in L^1(0,T; L^1_{\rm loc}(\R^d))$) and to the results in \cite{ambrosio}, every bounded distributional solution of the continuity equation with  such a vector field $b$ is also renormalized.
In other words, $u$ is a renormalized solution with initial datum $0$ (in the sense of Definition~\ref{defn:renorm-sol} with $c=0$).
By Lemma~\ref{lemma:test-fund} we can use $\varphi_R$ as a test function in \eqref{defn:renorm-sol-tested-many-functions}; in other words, for every $R,\delta>0$ there exists an absolutely continuous function $\Gamma_{\delta,R}:[0,T] \to \R$ such that $\Gamma_{\delta,R}(0)=0$,
$$\Gamma_{\delta,R}(t) = \int_{\R^d} \varphi_R(x) \beta_\delta (u(t,x))\, dx \qquad \mbox{for a.e. }t \in [0,T],$$
 \begin{equation}
 \label{defn:bmo-renorm-sol}
 \begin{split}
 \frac{d}{dt} \Gamma_{\delta,R}
 &= \int_{\R^d} \nabla\varphi_R \cdot b \, \beta_\delta(u) \, dx +\int_{\R^d} \varphi_R \nabla \cdot b \big(\beta_\delta(u)-u \beta_\delta'(u)\big) \, dx \qquad \mbox{a.e. in } [0,T].
\end{split}
\end{equation}

We estimate the first term in the right-hand side of \eqref{defn:bmo-renorm-sol} as in \cite[(4.15)]{CCS}, thanks to the growth condition \eqref{eqn:growth-b} on $b$. Let $b_1$ and $b_2$ be the two nonnegative functions as in~\eqref{e:gro}.
%such that
%$$
%\frac{|b(t,x)|}{1+|x|}\leq b_1(t,x)+b_2(t), \qquad b_1 \in L^1((0,T)\times\R^d), \qquad 
%b_2 \in L^1(0,T).
%$$
For every $R>1$, by explicit computation, we have that 
$$|\nabla \varphi_R(x)|\leq {\bf 1}_{B^c_R}(x)(d+1) \varphi_R(x) (R+|x|)^{-1} \qquad \mbox{for every }x\in \R^d.$$
Therefore we obtain that for every $R>1$ and $t \in[0,T]$
\begin{equation}
\label{eqn:est-1}
\begin{split}
&\hspace{-3em}\int_{\R^d} \nabla\varphi_R \cdot b \, \beta_\delta(u) \, dx 
\leq
(d+1) \int_{\R^d \setminus B_R}\frac{\varphi_R}{R+|x|} (1+|x|) (b_1+b_2) \beta_\delta(u) \, dx 
\\
&\leq
(d+1) \int_{\R^d \setminus B_R}{\varphi_R} (b_1+b_2) \beta_\delta(u) \, dx 
\\
&\leq
(d+1) \log\Big(1+\frac{\pi^2}{4\delta}\Big) \int_{\R^d \setminus B_R} b_1\, dx 
+
(d+1) b_2 \int_{\R^d}{\varphi_R} \beta_\delta(u) \, dx .
\end{split}
\end{equation}
Let $d_1$ and $d_2$ be nonnegative functions such that
$$
\begin{aligned}
& |\nabla \cdot b(t,x)| \leq d_1(t,x)+d_2(t,x), \\ 
& d_1 \in L^1(0,T; L^\infty(\R^d)), \qquad
d_2 \in  L^1(0,T;(BMO\cap L^1)(\R^d)).
\end{aligned}
$$
Let $\lambda> a$ to be chosen later.
Since $|u \beta_\delta'(u)| \leq 2$ (see \eqref{eqn:propr-gamma-de}) and since $\varphi_R(x) \leq 2^{-d-1}$,  for every $\delta>0$ and $t\in[0,T]$ we estimate the second term in the right-hand side of \eqref{defn:bmo-renorm-sol}
\begin{equation}
\label{eqn:bmo-est-2}
\begin{aligned}
&\int_{\R^d} \varphi_R \nabla \cdot b \big(\beta_\delta(u)-u \beta_\delta'(u)\big) \, dx 
\leq \int_{\R^d} \varphi_R (d_1+d_2) \big|\beta_\delta(u)-u \beta_\delta'(u)\big| \, dx 
\\
&\leq 
(\|  d_1 \|_{\infty} + \lambda( \|  d_2 \|_{1}+ \|  d_2 \|_{*})) \int_{\R^d} \varphi_R \big(\beta_\delta(u)+2\big) \, dx\\ 
&\;\; +
\Big( \log\Big(1+\frac{\pi^2}{4\delta}\Big)+2\Big) 2^{-d-1} \int_{\R^d} \big(d_2  - \lambda  ( \|  d_2 \|_{1}+ \|  d_2 \|_{*})\big)_+ \, dx 
\\
&\leq 
(\|  d_1 \|_{\infty} + \lambda ( \|  d_2 \|_{1}+ \|  d_2 \|_{*})) \int_{\R^d} \varphi_R \big(\beta_\delta(u)+2\big) \, dx \\
&\;\; +
C \Big( \log\Big(1+\frac{\pi^2}{4\delta}\Big)+2\Big) \exp(-c\lambda) 2^{-d-1} \|d_2 \|_{1},
\end{aligned}
\end{equation}
where in the last inequality we applied Lemma~\ref{lemma:bmo-dec} to the function $|d_2(t, \cdot)|$ (notice that the norms appearing in the previous formula have to be intended in $\R^d$ for fixed $t$).

For every $t\in [0,T]$ we define the functions
$$
\begin{aligned}
a_\lambda(t) & =  \|  d_1(t, \cdot) \|_{L^\infty(\R^d)} + \lambda ( \|  d_2(t, \cdot) \|_{1}+ \|  d_2(t, \cdot) \|_{*})  + (d+1) b_2(t), \\
b_{\lambda, R}(t) & = 2\big(\|  d_1(t, \cdot) \|_{L^\infty(\R^d)} + \lambda  ( \|  d_2(t, \cdot) \|_{1}+ \|  d_2(t, \cdot) \|_{*} \big) \| \varphi_R \|_{L^1(\R^d)} 
\\&\hspace{1em}+ C 2^{-d}\exp(-c\lambda) \|d_2 (t,\cdot) 
 \|_{1} \\
c_{R}(t) & = (d+1) \|b_1(t,\cdot) \|_{L^1(\R^d \setminus B_R)}, \\
d_{\lambda}(t)
& = C2^{-d-1}\exp(-c\lambda) \|d_2(t,\cdot)\|_{1}.
\end{aligned}
$$
%
%$$a_\lambda(t) =  \|  d_1(t, \cdot) \|_{L^\infty(\R^d)} + \lambda \|  d_2(t,\cdot) \|_{*}  + (d+1) b_2(t),$$
%$$b_{\lambda, R}(t) = \big(\|  d_1(t, \cdot) \|_{L^\infty(\R^d)} + \lambda \|  d_2(t,\cdot) \|_{*}\big) \| \varphi_R \|_{L^1(\R^d)} + C \exp(-c\lambda) \|d_2 (t,\cdot) \|_{*},
%$$
% $$c_{R}(t) = (d+1) \|b_1(t,\cdot) \|_{L^1(\R^d \setminus B_R)},$$
% $$d_{\lambda}(t) = C\exp(-c\lambda) \|d_2 (t,\cdot)  \|_{*} .
% $$
From \eqref{defn:bmo-renorm-sol}, \eqref{eqn:est-1}, and \eqref{eqn:bmo-est-2} we deduce that
 $$\frac{d}{dt}\Gamma_{\delta,R}(t)  \leq a_\lambda(t) \Gamma_{\delta,R}(t) + b_{\lambda,R}(t) + (c_R(t)+ d_{\lambda}(t) ) \log\Big(1+\frac{\pi^2}{4\delta}\Big).$$
 Let $\tau_0>0$ to be chosen later in terms of $b$ and independent on $R, \lambda$. It is enough to show that $u(t,\cdot) = 0$ for every $t\in [0, \tau_0]$, then we can repeat the argument in the subsequent time intervals.
Since by assumption $\Gamma_{\delta,R}(0)=0$, using Gronwall Lemma we obtain that for every $t\in [0,\tau_0]$
\begin{equation}
\label{est:bmo-gronw}
\begin{split}
\Gamma_{\delta,R}(t)  
&\leq \exp\Big(\int_0^{\tau_0} a_\lambda (s)ds\Big)\Big(\int_0^{\tau_0} b_{\lambda, R}(s)ds + \log\Big(1+\frac{\pi^2}{4\delta}\Big) \int_0^{\tau_0} (c_{R}(s) + d_{\lambda}%(s)
) ds \Big)
\\
&=\exp(A_\lambda )\Big(B_{\lambda, R} + \log\Big(1+\frac{\pi^2}{4\delta}\Big)(C_{R} + D_{\lambda}) \Big).
\end{split}
\end{equation}

If, by contradiction, $u(t,\cdot)$ is not identically $0$ for some $t\in [0,{\tau_0}]$, then there exist $R_0>0$ and $\gamma>0$ such that $m=\Leb{d}(\{ x \in B_{R_0}: [\arctan u(t,x)]^2> \gamma\})>0$.
We obtain that for every $R\geq R_0$
\begin{equation}
\label{eqn:bmo-contr}
\begin{split}
\frac{m }{2^{d+1}}\log\Big(1+\frac{\gamma}{\delta} \Big) 
&\leq \Gamma_{\delta,R}(t) \leq \exp(A_\lambda)\Big(B_{\lambda,R} + \log\Big(1+\frac{\pi^2}{4\delta}\Big)(C_{R} + D_{\lambda}) \Big).
\end{split}
\end{equation}
First, we fix $\tau_0>0$ so that 
$$\int_0^{\tau_0}  ( \|  d_2(s, \cdot) \|_{1}+ \|  d_2(s, \cdot) \|_{*}) \, ds \leq \frac{c}{2}\,.$$
%where $\ep$ is as in \eqref{e:Re}, and we choose $R = \exp(\lambda c/(2d+2))$.
Then, for a constant $C>0$, we have 
$$\exp({A_\lambda})D_{\lambda}\leq C\exp\Big(-\frac{c}{2}\lambda\Big),$$
and %for any $\e>0$ 
we can choose $\lambda$ big enough to have 
$$\exp(A_{\lambda})D_{\lambda}\leq \frac{m }{2^{d+3}}.$$ 
Since $b_1\in L^{1}((0,T)\times\R^d)$ we can choose $R$ big enough to have 
$$\exp(A_\lambda) C_R\leq \frac{m }{2^{d+3}}.$$
Then, we are left with 
\begin{equation}\label{eqn:final}
\frac{m }{2^{d+1}}\log\Big(1+\frac{\gamma}{\delta} \Big)\leq \exp(A_\lambda)B_{\lambda,R}+\frac{m }{2^{d+2}} \log\Big(1+\frac{\pi^2}{4\delta}\Big).
\end{equation}
Dividing  \eqref{eqn:final}  by $\log(\delta^{-1})$ and letting $\delta$ go to $0$ %, after a suitable choice of $\e$, 
we find a contradiction.
\end{proof}

\section*{Acknowledgments} This research has been partially supported by the SNSF grants 140232 and 156112 and has been started while the first and third author were a visitor and a PostDoc, respectively, at the Departement Mathematik und Informatik of the Universit\"at Basel. They would like to thank the department for the hospitality and the support. This note has been written on the occasion of the conference ``Contemporary topics in conservation laws'' in Besan\c con (France), February 9--12, 2015. The second author thanks the organizers for the kind invitation to the conference. The authors wish to thank the anonymous referees for the useful comments, which led to a cleaner statement of the main result.

\medskip
% The data information below will be filled by AIMS editorial staff
Received xxxx 20xx; revised xxxx 20xx.
\medskip

\end{document}